\documentclass[12pt]{amsart}
\usepackage{amsmath, amssymb, amsthm, bm, mathtools, enumitem, caption}
\usepackage{hyperref}
\usepackage{graphicx, tikz}

\usepackage{comment}

\usepackage[margin=1in]{geometry}
\usepackage{xcolor}
\theoremstyle{plain}
\newtheorem{theorem}{Theorem}[section]

\newtheorem{lemma}[theorem]{Lemma}
\newtheorem{corollary}[theorem]{Corollary}
\newtheorem*{conjecture*}{Conjecture}

\theoremstyle{definition}

\newtheorem{remark}[theorem]{Remark}

\newcommand{\ZZ}{\mathbb{Z}}
\newcommand{\QQ}{\mathbb{Q}}

\DeclareMathOperator{\Gal}{Gal}
\DeclareMathOperator{\Cl}{Cl}

\DeclareMathOperator{\num}{num}
\DeclareMathOperator{\den}{den}

\newcommand{\pif}{\pi} 
\newcommand{\thetaf}{\vartheta} 
\newcommand{\omegapf}{\omega_{\mathrm{pf}}} 

\title{Almost all primes are partially regular}
\date{February 3, 2026}

\newif\ifmanyauthors
\manyauthorstrue 

\ifmanyauthors
  \usepackage[foot]{amsaddr}
  \usepackage{textcomp}
  \newcommand{\dmd}{\ensuremath{\diamond}}

  \author{Evan Chen\textsuperscript{\dag}}
  \email{evan@axiommath.ai}

  \author{Chris Cummins\textsuperscript{*}}
  \email{chris@axiommath.ai}

  \author{Ben Eltschig\textsuperscript{*}}
  \email{ben@axiommath.ai}

  \author{Dejan Grubisic\textsuperscript{*}}
  \email{dejan@axiommath.ai}

  \author{Leopold Haller\textsuperscript{*}}
  \email{leo@axiommath.ai}

  \author{Letong Hong\textsuperscript{\dmd}}
  \email{carina@axiommath.ai}

  \author{Andranik Kurghinyan\textsuperscript{*}}
  \email{andranik@axiommath.ai}

  \author{Kenny Lau\textsuperscript{\dag*}}
  \email{kenny@axiommath.ai}

  \author{Hugh Leather\textsuperscript{*}}
  \email{hugh@axiommath.ai}

  \author{Seewoo Lee\textsuperscript{\dag}}
  \email{seewoo@axiommath.ai}

  \author{Aram Markosyan\textsuperscript{*}}
  \email{am@axiommath.ai}

  \author{Ken Ono\textsuperscript{\dag}}
  \email{ken@axiommath.ai}

  \author{Manooshree Patel\textsuperscript{*}}
  \email{manooshree@axiommath.ai}

  \author{Gaurang Pendharkar\textsuperscript{*}}
  \email{gaurang@axiommath.ai}

  \author{Vedant Rathi\textsuperscript{*}}
  \email{vedant@axiommath.ai}

  \author{Alex Schneidman\textsuperscript{*}}
  \email{alex@axiommath.ai}

  \author{Volker Seeker\textsuperscript{*}}
  \email{volker@axiommath.ai}

  \author{Shubho Sengupta\textsuperscript{\dmd}}
  \email{shubho@axiommath.ai}

  \author{Ishan Sinha\textsuperscript{*}}
  \email{ishan@axiommath.ai}

  \author{Jimmy Xin\textsuperscript{*}}
  \email{jimmy@axiommath.ai}

  \author{Jujian Zhang\textsuperscript{\dag*}}
  \email{jujian@axiommath.ai}

\else

  \author{Evan Chen}
  \email{evan@axiommath.ai}

  \author{Kenny Lau}
  \email{kenny@axiommath.ai}

  \author{Seewoo Lee}
  \email{seewoo@axiommath.ai}

  \author{Ken Ono}
  \email{ken@axiommath.ai}

  \author{Jujian Zhang}
  \email{jujian@axiommath.ai}

\fi

\subjclass[2010]{11Y40, 11R18}
\keywords{regular prime, Bernoulli number, Vandiver conjecture}
\address{Axiom Math, 124 University Avenue, Palo Alto, CA 94301}

\usepackage[obeyFinal,textsize=tiny,shadow,loadshadowlibrary]{todonotes}
\usepackage{soul}
\newcommand{\added}[1]{\ifmmode{\color{red}#1}\else{\color{red}\ul{#1}}\fi}

\begin{document}

\maketitle

\ifmanyauthors
\begin{center}
  \footnotesize
  Authors are listed alphabetically. \\
  \textsuperscript{\dag}Mathematical contributor,
  \textsuperscript{*}Engineering contributor,
  \textsuperscript{\dmd}Principal investigator.
\end{center}
\fi

\begin{abstract}
For odd primes $p$, we let $K_p:=\mathbb{Q}(\zeta_p)$ be the $p$th cyclotomic
field and let $\omega$ denote its Teichm\"uller character.
For $\alpha>1/2$,  we say that an odd prime $p$ is \emph{partially regular}
if the eigenspaces of the $p$-Sylow
subgroup of $\Cl(K_p)$ under the Galois action vanish for all characters
$\omega^{p-2k}$ with
\begin{equation}\label{range}
2\le 2k \le \frac{\sqrt{p}}{(\log p)^{\alpha}}.
\end{equation}
Equivalently, $p\nmid \num(B_{2k})$ throughout this range.
We prove that a density-one subset of primes is partially regular
in this sense.  By {\it Leopoldt reflection}, this yields a \emph{partial Vandiver Theorem}:
for a density-one set of primes $p$, the even eigenspaces
$A_p(\omega^{2k})$ vanish for all even
$2k$ satisfying \eqref{range}.
This result has consequences for Kubota--Leopoldt $p$-adic $L$-functions,
congruences between cusp forms and Eisenstein series, and $p$-torsion in
algebraic $K$-groups.  The theorem proving partial regularity for
almost all $p$ is fully formalized in Lean/Mathlib
and was produced
automatically by AxiomProver from a natural-language statement of the
conjecture.
\end{abstract}

\section{Introduction}

Let $p$ be an odd prime and set $\zeta_p:=e^{2\pi i/p}$.
Write $K_p:=\QQ(\zeta_p)$ and $\mathcal{O}_{K_p}:=\ZZ[\zeta_p]$ for the ring of integers.
Let $\Cl(K_p)$ denote the ideal class group of $\mathcal{O}_{K_p}$.
Kummer's method for Fermat's Last Theorem (FLT) with exponent $p$
is based on the factorization
\[
x^p+y^p=\prod_{a=0}^{p-1} (x+\zeta_p^a y)
\qquad \text{in }\mathcal{O}_{K_p},
\]
together with factorization properties as dictated by the class
group.
The classical criterion of Kummer,  referred to as $p$-\emph{regularity}, in modern terms,
asserts that
if $p\nmid |\Cl(K_p)|$, then FLT holds for exponent $p$ (for example,
see Kummer's original work \cite{Kummer1850} and modern expositions such as \cite{Washington1997}).

Regularity is a condition that is easily described in terms of Bernoulli numbers.
A prime $p$ is \emph{regular} if $p \nmid B_{2k}$ for all even $2k \in \{2, \dots, p-3\}$.
For irregular primes, however, we may still consider specific indices.
With this as motivation, combined with a theorem of Jensen that asserts that
there are infinitely many irregular primes \cite{Jensen},  we study ``partial regularity".
Siegel's conjecture \cite{Siegel},
that irregular primes occur with asymptotic density
$1-e^{-1/2}\approx 0.393469\dots,$ augments interest in this direction.
We ask whether there are partial regularity results that hold for almost all primes.

To this end,  we say that an odd prime $p$ is \emph{$m$-regular} if
$p\nmid \num(B_{2k})$ for every even integer $2\le 2k\le \min \left (m, p-3\right).$
The main result proved in this note is the following estimate for $M_{\alpha}(p)$-regular primes,
where  for $\alpha>1/2$, we define
\begin{equation}\label{eq:M-of-p}
M_{\alpha}(p) \coloneq \left\lfloor \frac{\sqrt{p}}{(\log p)^{\alpha}}\right\rfloor.
\end{equation}
We prove that a density-one subset of the primes is partially regular.

\begin{theorem}\label{thm:T2}
Fix $\alpha> 1/2$ and define $M_{\alpha}(p)$ as in \eqref{eq:M-of-p}.
Then there exists a constant $C_{\alpha}>0$ such that as $X\rightarrow +\infty$ we have
\[
\#\Bigl\{\,p\le X \text{ prime}:\ p\ \text{is not $M_{\alpha}(p)$-regular}\Bigr\}
\ \le\ C_{\alpha}\,\frac{X}{(\log X)^{2\alpha}}.
\]
In particular, almost every prime is $M_{\alpha}(p)$-regular.
\end{theorem}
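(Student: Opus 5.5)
The plan is to bound the number of bad primes by counting prime divisors of the numerators of the first few Bernoulli numbers. If $p\le X$ is not $M_\alpha(p)$-regular, then there is an even $2k$ with
\[
2\le 2k\le \min\bigl(M_\alpha(p),p-3\bigr)\quad\text{and}\quad p\mid \num(B_{2k}).
\]
Since $M_\alpha(p)\le M:=\lfloor\sqrt{X}/(\log X)^{\alpha}\rfloor$ for large $p$, at least up to a harmless adjustment for small $p$, every bad prime $p\le X$ divides
\[
N(X):=\prod_{\substack{2\le 2k\le M\\ B_{2k}\ne 0}} |\num(B_{2k})|.
\]
Here $\num(B_{2k})\neq 0$ for all $k\ge1$. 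The number of distinct primes dividing a nonzero integer $N$ is at most $\log N/\log 2$. This crude count gives a bound of size $\sum_{2k\le M}\log|\num B_{2k}|$, which is too weak if we also need $p$ to be large. The refinement is to count only the bad primes with $p>\sqrt X$, say, or with $p> X/(\log X)^{A}$. Each such prime contributes at least $\log p\gg \log X$ to $\log N(X)$. Hence
\[
\#\{\text{bad } p\in(\sqrt X,X]\}\le \frac{2\log N(X)}{\log X}.
\]
Primes $p\le \sqrt X$ number only $O(\sqrt X)$, which is negligible.

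The key step is the size estimate for $\num(B_{2k})$. I will use the classical formula
\[
|B_{2k}|=\frac{2(2k)!}{(2\pi)^{2k}}\zeta(2k)\le 4\,\frac{(2k)!}{(2\pi)^{2k}},
\]
together with von Staudt--Clausen. The latter says $\den(B_{2k})=\prod_{(q-1)\mid 2k}q$, which divides $\prod_{q\le 2k+1}q\le 4^{2k+1}$. Combining these,
\[
|\num(B_{2k})|\le |B_{2k}|\,\den(B_{2k})\le 4^{2k+2}(2k)!.
\]
Taking logarithms gives $\log|\num(B_{2k})|\ll k\log k+k\ll k\log k$. Summing over $2k\le M$ yields
\[
\log N(X)\ll M^2\log M\ll \frac{X}{(\log X)^{2\alpha}}\cdot\log X.
\]
Dividing by $\log X$ then gives
\[
\#\{\text{bad } p\}\ll \frac{X}{(\log X)^{2\alpha}}+\sqrt X\ll_\alpha \frac{X}{(\log X)^{2\alpha}}.
\]
This is exactly the claimed bound. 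The condition $\alpha>1/2$ is used only to make this $o(\pi(X))$, which gives the "almost every prime" conclusion.

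The main obstacle is bookkeeping rather than depth. One point is that $M_\alpha(p)$ depends on $p$, whereas $N(X)$ is built from the uniform cutoff $M$. This is handled by the monotonicity of $t\mapsto \sqrt t/(\log t)^\alpha$ for $t$ large, with finitely many small $p$ absorbed into $C_\alpha$. The other point is getting explicit constants in the Stirling/von Staudt bounds, which matters for the Lean formalization. For the latter I would avoid Stirling and use the elementary bound $(2k)!\le (2k)^{2k}$, since it suffices.
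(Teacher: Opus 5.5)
Your proposal is correct, but it takes a different route from the paper at the key step: turning $\log(\text{product})\ll M^2\log M$ into a count of prime divisors.

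\textbf{The paper's route.} It bounds \emph{every} distinct prime factor of $P_{m_X}$ with the general estimate $\omegapf(n)\le C_3\log n/\log\log n$ (Lemma~\ref{lem:omega-bound}), which it derives from the Prime Number Theorem via $\thetaf$ and $\pif$. To cancel the $\log m_X$ in the numerator, it then needs a separate \emph{lower} bound $\log\log P_{m_X}\ge\log m_X$. That bound comes from Euler's formula together with a Stirling-type lower bound for $(2m)!$.

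\textbf{Your route.} You use the fact that bad primes are at most $X$ and cut at $\sqrt X$.
\begin{itemize}
\item Each bad prime in $(\sqrt X,X]$ contributes at least $\tfrac12\log X$ to $\log N(X)$, so there are at most $2\log N(X)/\log X$ of them.
\item The factor $\log M\le\tfrac12\log X$ coming from $\log N(X)\ll M^2\log M$ then cancels directly.
\item Bad primes below $\sqrt X$ number trivially at most $\sqrt X=o\bigl(X/(\log X)^{2\alpha}\bigr)$.
\end{itemize}

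\textbf{What each buys.} Your argument needs no prime-distribution input and no lower bound on the Bernoulli product; only the upper bound on numerators is used. The monotonicity bookkeeping also becomes automatic, since the primes $p<e^{2\alpha}$ fall into the $\sqrt X$ bin once $X$ is large. The paper's route gives a size-independent statement, $\omegapf(P_{m_X})\ll m_X^2$, which counts all prime divisors including those exceeding $X$, at the cost of more analytic machinery. Both rest on the same principle: a prime factor $q$ ``costs'' $\log q$. You choose the threshold from $X$ rather than from $n$, and this suffices because $\log\sqrt X\asymp\log X\asymp\log\log P_{m_X}$.

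Finally, your use of $\prod_{q\le n}q\le 4^n$ for the denominator, in place of the paper's $\den(B_{2k})\le(2k+1)!$, is an inessential variation. Either gives $\log|\num(B_{2k})|\ll k\log k$.
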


\begin{remark}
  \label{rem:ten}
  In fact, it turns out that
  \[ C_\alpha = 10 \]
  is a suitable constant for every $\alpha > 1/2$ in Theorem~\ref{thm:T2} (as well as Corollary~\ref{thm:T1}).
  In particular, the constant
  may be chosen independently of $\alpha$.
\end{remark}

\begin{remark}
The Prime Number Theorem asserts, as $X\rightarrow +\infty,$ that
\[ \pi(X) \coloneq \# \left \{ p\leq X \ : \ p \ {\text {\rm prime}}\right \}\sim \frac{X}{\log X} \]
Therefore, Theorem~\ref{thm:T2} refers to a density-one subset of the prime numbers.
This is what we mean by almost all primes.
\end{remark}

It is natural to ask whether Theorem~\ref{thm:T2} has implications for the
mathematics related to Fermat's Last Theorem.
To this end, we revisit the well-studied Galois refinement of Kummer's criterion.
We let $A_p$ denote the $p$-part of the class group $\Cl(K_p)$. The action of the Galois group
\[ G_p \coloneq \Gal(K_p/\QQ) \cong (\ZZ/p\ZZ)^\times \]
gives the canonical decomposition of $A_p$ into eigenspaces of $\mathbb{Z}_p[G_p]$-modules
\begin{equation}\label{Eigenspaces}
A_p \cong \bigoplus_{i=0}^{p-2} A_p(\omega^i),
\end{equation}
where $\omega: G_p \to \mathbb{Z}_p^\times$ is the Teichm\"uller character,
which sends $a$ to the \emph{unique} $(p-1)$-st root of unity in $\ZZ_p^\times$
satisfying $\omega(a)\equiv a \pmod p$.
These eigenspaces are divided into a ``minus part'' $A_p^-$ (where $i$ is odd)
and the ``plus part'' $A_p^+$ (where $i$ is even).
The latter corresponds to the $p$-part of the class group of the maximal real
subfield $K_p^+ = \mathbb{Q}(\zeta_p + \zeta_p^{-1})$.

The structural understanding of $A_p^-$ is provided by the Herbrand-Ribet Theorem,
which links the non-vanishing of odd eigenspaces to the divisibility of Bernoulli numbers $B_{2k}$.
For even $2k$ in the range $2 \leq 2k \leq p-3$, this is the statement that
\begin{equation}\label{HSystem}
A_p(\omega^{p-2k}) \neq 0 \iff p \mid B_{2k}.
\end{equation}
This result, initiated by Herbrand \cite{Herbrand} and completed by Ribet
\cite{Ribet1976} using the theory of modular forms,
demonstrates that the ``minus'' components are entirely governed by the
Bernoulli numbers and the $p$-adic properties of the Riemann zeta function at
negative odd integers via Euler's identity $\zeta(1-2k)=-B_{2k}/2k$ (for example, see \cite{IrelandRosen}).

While $A_p^-$ is well-understood,
the ``plus'' part $A_p^+$ is the subject of Vandiver's Conjecture (also attributed to Kummer),
which asserts that $p$ does not divide the class number $h_p^+$ of $K_p^+$.
In terms of eigenspaces, the conjecture asserts the following.

\begin{conjecture*}[Kummer-Vandiver \cite{Kummer1849, Vandiver1929}]
If $p$ is an odd prime, then we have
\[ A_p(\omega^{2k}) = 0 \quad \text{for all even } 2k \in \{2, 4, \dots, p-3\}. \]
\end{conjecture*}
\smallskip

Here we consider a weaker form of Vandiver's Conjecture,
which pertains to the triviality of initial ranges of the eigenspaces.
The following result for almost every prime $p$ follows from Theorem~\ref{thm:T2}.

\begin{corollary}\label{thm:T1}
If $\alpha>1/2$, then there is a constant
$C_{\alpha}>0$ such that as $X\rightarrow +\infty$ we have
\[
  \#\Bigl\{\,p\le X \text{ prime}:\  A_p(\omega^{2k})\neq 0 \ \text{for some}\ 2\leq 2k\leq M_{\alpha}(p) \Bigr\}
  \ \le\ C_{\alpha}\,\frac{X}{(\log X)^{2\alpha}}.
\]
In particular, for almost every prime $p$ we have
\[ A_p(\omega^2) \ = \ A_p(\omega^4)\ = \ \dots \ = A_p(\omega^{M_{\alpha}(p)})=0. \]
\end{corollary}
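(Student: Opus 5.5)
The plan is to deduce Corollary~\ref{thm:T1} from Theorem~\ref{thm:T2}. I will show that the exceptional set in the corollary is contained in the exceptional set of Theorem~\ref{thm:T2}, up to finitely many small primes, which are absorbed by enlarging $C_\alpha$.

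The link between the even and odd eigenspaces is Leopoldt's reflection theorem (Spiegelungssatz), in the form
\[
\operatorname{rank}_p A_p(\omega^{i}) \;\le\; \operatorname{rank}_p A_p(\omega^{1-i}) \;\le\; 1+\operatorname{rank}_p A_p(\omega^{i})
\qquad (i \text{ even}),
\]
together with $\omega^{1-i}=\omega^{p-i}$, since $\omega$ has order $p-1$. Taking $i=2k$ with $2\le 2k\le p-3$, we get:
\[
A_p(\omega^{2k})\neq 0 \;\Longrightarrow\; A_p(\omega^{p-2k})\neq 0 \;\Longrightarrow\; p\mid B_{2k},
\]
where the last implication is Herbrand's half of \eqref{HSystem}. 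Only the direction $A_p(\omega^{p-2k})\ne 0\Rightarrow p\mid B_{2k}$ is needed; it is the classical Stickelberger argument and does not require Ribet. Since $p\mid B_{2k}$ is the same as $p\mid\num(B_{2k})$, the chain shows: if $A_p(\omega^{2k})\neq0$ for some even $2k$ with $2\le 2k\le \min(M_\alpha(p),p-3)$, then $p$ is not $M_\alpha(p)$-regular.

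Next I handle the range restriction. For $p$ large enough in terms of $\alpha$ we have $M_\alpha(p)\le \sqrt p\le p-3$, so the corollary's range $2\le 2k\le M_\alpha(p)$ lies inside the range where Herbrand applies. The finitely many primes $p<p_0(\alpha)$ contribute $O_\alpha(1)$ to the count, which is at most a constant multiple of $X/(\log X)^{2\alpha}$ for large $X$. Hence
\[
\#\{p\le X:\ A_p(\omega^{2k})\ne0 \text{ for some } 2\le 2k\le M_\alpha(p)\}\;\le\; \#\{p\le X:\ p\text{ not }M_\alpha(p)\text{-regular}\}+O_\alpha(1),
\]
and Theorem~\ref{thm:T2} bounds the right-hand side by $C_\alpha X/(\log X)^{2\alpha}$ after adjusting the constant. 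Comparing with $\pi(X)\sim X/\log X$ gives the density-one statement, because $2\alpha>1$.

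The main point of care is the reflection step. One must quote Leopoldt's theorem with the correct pairing of characters, $\chi \leftrightarrow \omega\chi^{-1}$, so that $\omega^{2k}$ is paired with $\omega^{1-2k}=\omega^{p-2k}$, exactly the index covered by \eqref{HSystem}. One also needs the inequality in the direction "even eigenspace nonzero implies odd eigenspace nonzero", i.e.\ $\operatorname{rank}_p A(\omega^{2k})\le \operatorname{rank}_p A(\omega^{p-2k})$. Everything else is bookkeeping.
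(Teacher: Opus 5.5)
Your proposal is correct and is essentially the paper's own deduction. It chains Leopoldt's reflection $A_p(\omega^{2k})\neq 0 \Rightarrow A_p(\omega^{p-2k})\neq 0$ with the Herbrand--Ribet correspondence to get $p\mid \num(B_{2k})$, then applies Theorem~\ref{thm:T2}. Your refinements are accurate and fill in bookkeeping the paper's one-line proof leaves implicit: only Herbrand's (Stickelberger) direction is needed, and the condition $2k\le p-3$ together with finitely many small primes is absorbed by enlarging $C_\alpha$.
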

\smallskip

Corollary~\ref{thm:T1} follows from Theorem~\ref{thm:T2},
which is directly related to the odd eigenspaces by the Herbrand-Ribet Theorem.
The link between the two halves $A_p^{-}$ and $A_p^+$ of the class group is
established by a {\it reflection principle}, known as Leopoldt's \textit{Spiegelungssatz} \cite{leopoldt}.
This theorem provides a specialized injection
\[ A_p(\omega^{2k}) \neq 0 \implies A_p(\omega^{p-2k}) \neq 0. \]
This implies that any $p$-divisibility in the even Vandiver components are
reflected as $p$-divisibility in the corresponding odd Herbrand-Ribet components.
This gives a one way relationship between Bernoulli numbers and the Vandiver conjecture,
which can be viewed as a local refinement of the classical notion of $p$-regularity.

\begin{proof}[Deduction of Corollary~\ref{thm:T1} from Theorem~\ref{thm:T2}]
Corollary~\ref{thm:T1} is an immediate consequence of Theorem~\ref{thm:T2} by the
Herbrand--Ribet theorem and Leopoldt's reflection principle
\[ A_p(\omega^{2k}) \neq 0 \implies A_p(\omega^{p-2k}) \neq 0. \qedhere \]
\end{proof}

The result of Corollary~\ref{thm:T1} regarding the triviality of the eigenspaces
$A_p(\omega^{2k})$ has immediate consequences for $p$-adic analytic theory.
To state this, we recall the \textit{Kubota-Leopoldt $p$-adic $L$-function} $L_p(s, \chi)$,
which interpolates the values of Dirichlet $L$-functions $p$-adically.
By the ``Main Conjecture of Iwasawa Theory'' proved by Mazur and Wiles
\cite{MazurWiles} and Leopoldt's reflection principle described below,
the vanishing of the algebraic eigenspace $A_p(\omega^{2k})$ is equivalent to
the associated $p$-adic $L$-function being a unit in the Iwasawa algebra
$\Lambda = \mathbb{Z}_p[[T]]$ when $p \nmid B_{2k}$.
Thus, Corollary~\ref{thm:T1} yields the following analytic nonvanishing theorem for $p$-adic $L$-functions,
which asserts that $\ZZ_p$ is a zero-free region.

\begin{corollary}\label{RH}
  If $\alpha >1/2$ and $M_{\alpha}(p)$ is defined as in \eqref{eq:M-of-p}, then for almost every prime $p$,
  the Kubota-Leopoldt $p$-adic $L$-functions satisfy
  \[
    \big| L_p(s, \omega^{2k}) \big|_p = 1 \quad \text{for all } s \in \mathbb{Z}_p,
  \]
  for all even integers $2 \le 2k \le M_{\alpha}(p)$.
  In particular, these functions have no zeros on $\mathbb{Z}_p$.
\end{corollary}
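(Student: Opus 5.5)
We deduce Corollary~\ref{RH} from Corollary~\ref{thm:T1} (equivalently from Theorem~\ref{thm:T2}), working one prime at a time. We restrict to the density-one set of primes $p$ that are $M_\alpha(p)$-regular and for which $A_p(\omega^{2k})=0$ for all even $2\le 2k\le M_\alpha(p)$. Fix such a $p$ and such a $k$. For $p$ large we have $M_\alpha(p)<p-3$, so $\omega^{2k}$ is a nontrivial even character. By Iwasawa's construction there is then a power series $f(T,\omega^{2k})\in\Lambda=\ZZ_p[[T]]$ with
\[
L_p(s,\omega^{2k}) = f\bigl((1+p)^s-1,\ \omega^{2k}\bigr), \qquad s\in\ZZ_p .
\]
As $s$ ranges over $\ZZ_p$, the quantity $(1+p)^s-1$ ranges over $p\ZZ_p$. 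So it suffices to show that $f(0,\omega^{2k})$ is a $p$-adic unit. Indeed, if the constant term is a unit, then $|f(t)|_p=1$ for every $t$ with $|t|_p<1$, since all higher terms have absolute value strictly less than $1$.

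Next we evaluate the constant term using the interpolation formula at $s=1-2k$, written as $s\equiv$ a suitable integer. The cleanest choice uses Kummer congruences. For $n\equiv 0 \pmod{p-1}$ with $n\geq 2$,
\[
L_p(1-n,\omega^{2k}) = -(1-p^{n-1})\frac{B_{n,\omega^{2k-n}}}{n}.
\]
Taking instead $n=2k$ gives $\omega^{2k-n}=1$, and hence
\[
L_p(1-2k,\omega^{2k}) = -(1-p^{2k-1})\frac{B_{2k}}{2k}.
\]
Here $2k<p-1$, so $p\nmid 2k$; by von Staudt--Clausen $B_{2k}$ is $p$-integral; and $1-p^{2k-1}$ is a unit. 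Since $p$ is $M_\alpha(p)$-regular, $p\nmid \num(B_{2k})$, so $L_p(1-2k,\omega^{2k})$ is a $p$-adic unit. Because $1-2k\in\ZZ_p$, this value equals $f(t_0)$ with $t_0\in p\ZZ_p$. All values of $f$ on $p\ZZ_p$ are congruent to $f(0)$ modulo $p$, so $f(0)$ is a unit. Then $|L_p(s,\omega^{2k})|_p=1$ for all $s\in\ZZ_p$, and in particular $L_p$ has no zeros on $\ZZ_p$.

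The main point to check carefully is the integrality of $f$, namely that $f(T,\omega^{2k})\in\Lambda$ rather than merely having a pole. This holds because $\omega^{2k}\neq 1$: only the trivial character produces the factor $1/T$-type pole. We must also align the normalization, namely $L_p(s,\chi)$ with $\chi$ even versus the $\omega^{1-n}$ twist in the interpolation property. Under the standard convention $L_p(1-n,\chi)=-(1-\chi\omega^{-n}(p)p^{n-1})B_{n,\chi\omega^{-n}}/n$, the choice $n=2k$ makes $\chi\omega^{-n}$ trivial, exactly as used above. In this route only the regularity hypothesis, i.e.\ Theorem~\ref{thm:T2}, is used; the vanishing of $A_p(\omega^{2k})$ from Corollary~\ref{thm:T1} and the Main Conjecture give an alternative, algebraic explanation of the same unit statement. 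The exceptional set is that of Theorem~\ref{thm:T2} together with the finitely many small $p$ for which $M_\alpha(p)\ge p-3$, so it is still $O(X/(\log X)^{2\alpha})$.
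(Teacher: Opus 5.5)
Your argument is correct, but it takes a different route from the paper.

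\textbf{The paper's route.} It works on the algebraic side. From Theorem~\ref{thm:T2} and Herbrand--Ribet it gets $A_p(\omega^{p-2k})=0$. The control theorem and Nakayama's lemma upgrade this to vanishing of the $\omega^{p-2k}$-eigenspace of the unramified Iwasawa module. Finally, the Mazur--Wiles main conjecture identifies the characteristic ideal of that eigenspace with the ideal generated by the Kubota--Leopoldt power series for $\omega^{2k}$, forcing that series to be a unit of $\Lambda$.

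\textbf{Your route.} You stay entirely on the analytic side, using three ingredients:
\begin{itemize}
\item Iwasawa's integrality $f(T,\omega^{2k})\in\ZZ_p[[T]]$, valid because $\omega^{2k}\neq 1$ is even and of the first kind.
\item The interpolation value $L_p(1-2k,\omega^{2k})=-(1-p^{2k-1})B_{2k}/2k$, which is a $p$-adic unit by regularity.
\item The observation that $f$ takes values in a single residue class mod $p$ on $p\ZZ_p$. So one unit value forces $f(0)\in\ZZ_p^\times$, hence $f\in\Lambda^\times$ and $|L_p(s,\omega^{2k})|_p=1$ on all of $\ZZ_p$.
\end{itemize}
This is the classical Kummer-congruence fact that $f(T,\omega^{2k})$ is a unit if and only if $p\nmid B_{2k}$.

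\textbf{Comparison.} Both routes reach the same conclusion, that the $p$-adic $L$-function is a unit of $\Lambda$. Yours needs only Theorem~\ref{thm:T2}, with no appeal to Herbrand--Ribet, Leopoldt reflection, the control theorem, or Mazur--Wiles. It also shows that the corollary is equivalent, prime by prime and index by index, to $M_\alpha(p)$-regularity. The paper's route adds the structural statement that the corresponding Iwasawa module eigenspace vanishes, but nothing needed for the corollary as stated.

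\textbf{A small slip.} The formula you introduce ``for $n\equiv 0\pmod{p-1}$'' is wrong as written. For such $n$ one has $\omega^{2k-n}=\omega^{2k}$, whose value at $p$ is $0$, so the Euler factor is $1$ rather than $1-p^{n-1}$. You presumably meant $n\equiv 2k\pmod{p-1}$. Since you immediately specialize to $n=2k$ and then quote the correct general convention, the argument is unaffected.
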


\begin{remark}
    It is important to note that Corollary~\ref{RH} is qualitatively different (and, in a
$p$-adic sense) and much stronger than the zero-free statement one has in the
complex-analytic setting of the classical Riemann Hypothesis.
    While the Generalized Riemann Hypothesis (GRH) predicts that all non-trivial
    zeros of $L(s, \chi)$ lie on the critical line $\text{Re}(s) = 1/2$,
    the $p$-adic result above asserts that there are no zeros at all in the domain $\ZZ_p$.
    This phenomenon, while striking from a complex-analytic perspective, is standard in $p$-adic analysis.
    It corresponds to the case where the characteristic polynomial in Iwasawa theory is a unit (degree 0).
    This reflects the fact that ``most" primes are expected to be regular (or satisfy Vandiver's conjecture),
    in which case the associated arithmetic objects are trivial and the analytic
    functions are invertible units.
\end{remark}
\begin{proof}[Proof of Corollary~\ref{RH}]
By the standard control theorem in cyclotomic Iwasawa theory, the $\omega^{p-2k}$-eigenspace of the
unramified Iwasawa module over $K_\infty/K$ has $\Gamma$-coinvariants canonically identified with
$A_p(\omega^{p-2k})$ (see e.g. Washington~\cite[Ch.~13]{Washington1997}).  Hence
$A_p(\omega^{p-2k})=0$ forces this eigenspace to be zero by Nakayama's lemma.
By Mazur--Wiles~\cite{MazurWiles}, the cyclotomic main conjecture identifies the characteristic
ideal of this eigenspace with the principal ideal generated by the Kubota--Leopoldt $p$-adic
$L$-function attached to $\omega^{2k}$ (equivalently, to $\omega^{1-(p-2k)}$), so that vanishing of the
module is equivalent to this $p$-adic $L$-function being a unit of $\Lambda$.
Therefore, its values on $\ZZ_p$ are $p$-adic units, i.e.
$|L_p(s,\omega^{2k})|_p=1$ for all $s\in\ZZ_p$.
\end{proof}

\medskip
We record several further ways in which Theorem~\ref{thm:T2}, through the ubiquity of Bernoulli
numbers, governs phenomena across number theory and arithmetic geometry.
Classically, Bernoulli numbers first arise in formulas for sums of
powers of integers via Faulhaber's formula (see Chapter 15 of \cite{IrelandRosen}), but their deeper
significance emerges through their appearance in special values of
$L$-functions. In particular, the values of the Riemann zeta function at
negative odd integers are given by
\[
\zeta(1-2k) = -\frac{B_{2k}}{2k},
\]
so that divisibility properties of Bernoulli numerators encode subtle
information about these special values. This connection plays a central
role in algebraic number theory, especially in Iwasawa theory and the
study of cyclotomic fields as mentioned earlier (also see \cite[Chapter~5]{Washington1997}).

A second, striking appearance of Bernoulli numbers occurs in the theory
of modular forms. Certain congruences between Eisenstein series and
cuspidal Hecke eigenforms are controlled precisely by divisibility of
Bernoulli numbers. The most famous example is Ramanujan's tau-function congruence
\[
\tau(n) \equiv \sum_{d\mid n}d^{11} \pmod{691},
\]
relating the Fourier coefficients of the weight~$12$ cusp form
$\Delta(q)$ to those of the Eisenstein series $E_{12}(q)$. This congruence
reflects the fact that
\[
B_{12} = -\frac{691}{2730},
\]
so that the prime $691$ divides the numerator of $B_{12}$. More
generally, as explained by Swinnerton-Dyer \cite{SwinnertonDyer}, primes dividing numerators of
Bernoulli numbers give rise to congruences between even-weight Eisenstein
series and cuspidal eigenforms, revealing deep links between special
values of $L$-functions, rational torsion on elliptic curves,
and the structure of Hecke algebras (see \cite{DiamondShurman2005, Mazur1977, OnoCBMS}).

Finally, Bernoulli numbers also govern torsion phenomena in algebraic
$K$-theory. Algebraic $K$-groups provide a systematic way to measure the
failure of unique factorization and related structural properties in
rings, generalizing classical invariants such as the unit group and the
class group. In the case of the integers $\ZZ$, deep conjectures of
Lichtenbaum and Quillen, proved through the work of Borel and others,
relate the $p$-torsion in higher $K$-groups $K_{4k-2}(\ZZ)$ to divisibility
of Bernoulli numbers. Thus, just as in cyclotomic class groups and
modular forms, primes dividing Bernoulli numerators control subtle
torsion phenomena in $K$-theory. We refer the reader to
\cite[Chapter~18]{Weibel2013} for an accessible introduction to these ideas.

We record a convenient four-part corollary illustrating immediate consequences of Theorem~\ref{thm:T2}.

\begin{corollary}\label{cor:three}
Fix $\alpha>1/2$ and let $M_{\alpha}(p)$ be as in \eqref{eq:M-of-p}.
Then for almost every  prime $p$, the following hold simultaneously
for every integer $1 \le 2k \le M_{\alpha}(p)$:
\begin{enumerate}[label=\textnormal{(\arabic*)}, leftmargin=2.3em]
\item The congruence
\[
\sum_{a=1}^{p-1} a^{2k}\ \equiv\ B_{2k}\,p \pmod{p^2}
\]
holds in $\ZZ/p^2\ZZ$ (interpreting $B_{2k}$ via its reduction modulo $p^2$).
\item
We have that $\zeta(1-2k)$ is a $p$-adic unit.
\item
There is no normalized cuspidal Hecke eigenform $f(q)$ of level $1$ and weight $2k$
whose Fourier expansion satisfies
\[ f(q) \equiv \sum_{n=1}^{\infty}\sum_{d\mid n}d^{2k-1}q^n\pmod p. \]
\item
The group $K_{4k-2}(\ZZ)$ has no $p$-torsion.
\end{enumerate}
\end{corollary}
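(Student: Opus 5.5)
The plan is to apply Theorem~\ref{thm:T2} once, so that a density-one set of primes $p$ is $M_{\alpha}(p)$-regular. For such $p$ we have $p\nmid \num(B_{2k})$ for every even $2\le 2k\le M_{\alpha}(p)$; since $M_\alpha(p)<p-3$ for large $p$, the $\min$ in the definition of $m$-regularity causes no trouble. We may also discard the finitely many small primes and assume $M_\alpha(p)<p-1$, so $(p-1)\nmid 2k$ throughout the range. Then by von Staudt--Clausen $p\nmid \den(B_{2k})$, and $B_{2k}$ is a $p$-adic unit. Each of the four items should then follow from this single fact.

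For (1), I would use Faulhaber's formula
\[
\sum_{a=1}^{p-1}a^{2k}=\frac{1}{2k+1}\sum_{j=0}^{2k}\binom{2k+1}{j}B_j\,p^{2k+1-j}.
\]
The $j=2k$ term is $B_{2k}p$. For $j\le 2k-1$, the term has $p$-adic valuation at least $2k+1-j-1-v_p(2k+1)\geq 2$, using that $pB_j$ is $p$-integral and $2k+1<p$. This gives the congruence modulo $p^2$; the case $2k=2$ is checked the same way. For (2), $\zeta(1-2k)=-B_{2k}/2k$, and $2k<p$ is a unit, so $\zeta(1-2k)$ is a unit.

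For (3), suppose such an $f$ existed. The difference $f-E_{2k}$ is congruent mod $p$ to the constant $-B_{2k}/4k$. I would argue in two cases. If $2k<12$ or $2k=14$, there are no cusp forms and the claim is vacuous. Otherwise I would use the theorem on mod $p$ modular forms of level $1$ (Swinnerton-Dyer, Serre): for $p\ge5$ and weight $2k<p-1$, a nonzero constant is not congruent to a form of weight $2k\neq 0$ unless $p-1\mid 2k$. Hence $B_{2k}/4k\equiv 0$, contradicting (2).

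For (4), I would cite the known computation (Quillen--Lichtenbaum via Voevodsky--Rost, see \cite[Ch.~18]{Weibel2013}): for odd $p$, the $p$-part of $|K_{4k-2}(\ZZ)|$ equals the $p$-part of $\num(B_{2k}/k)$. This vanishes by the unit property above.

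The main obstacle is only bookkeeping of citations, particularly the precise form of the $K$-theory order formula and the mod $p$ constant-form lemma in (3). The density statement itself is inherited directly from Theorem~\ref{thm:T2}. Because a finite intersection of density-one sets has density one, the conclusions hold simultaneously for all four items.
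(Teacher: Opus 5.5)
Your proposal is correct and follows the route the paper intends. The paper gives no separate proof; it presents the corollary as an immediate consequence of Theorem~\ref{thm:T2} through the same classical links you use (Faulhaber's formula, $\zeta(1-2k)=-B_{2k}/2k$, Swinnerton-Dyer's mod $p$ theory, and the Lichtenbaum--Quillen order of $K_{4k-2}(\ZZ)$). One small fix in (1): your valuation bound only gives $1$ at $j=2k-1$, but that term vanishes because $B_{2k-1}=0$ for $2k\ge 4$, and it equals $-p^2/2$ when $2k=2$. Note also that (1) holds for every prime $p>2k+1$ with no regularity input at all.
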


\begin{remark}[Generalized Bernoulli numbers]
Let $\chi$ be a Dirichlet character modulo $N$.
The generalized Bernoulli numbers $B_{m,\chi}$ are defined by
\[
\sum_{a=1}^{N} \chi(a)\,\frac{t e^{at}}{e^{Nt}-1}
=\sum_{m=0}^{\infty} B_{m,\chi}\,\frac{t^m}{m!},
\]
and satisfy $L(1-m,\chi)=-B_{m,\chi}/m$ for $m\ge 1$ (see \cite[Ch.~4]{Washington1997}).
A {\it mutatis mutandis} version of Theorem~\ref{thm:T2} can be proved for the
$p$-divisibility of numerators of $B_{m,\chi}$ in suitable ranges, yielding
corresponding ``almost all primes'' corollaries for congruences of cusp forms
with Nebentypus, special values of $p$-adic $L$-functions, and torsion in
$K$-groups of rings of integers.
\end{remark}

\begin{remark}[Autonomous proof and Lean verification]
This work is a case study and test case for AxiomProver, an AI tool currently under development.
We asked the system to prove Theorem~\ref{thm:T2}, and it
generated a Lean/Mathlib statement and a fully verified proof.
Using that formal development as a reference point,
we prepared the exposition in the main text for a mathematical audience, aiming to supply context,
motivation, and a streamlined derivation that can be read independently of the Lean code.
\end{remark}

\medskip

This paper is organized as follows.
In Section~\ref{Section2}, we recall  classical facts about Bernoulli numbers,
and in Section 3 we prove Theorem~\ref{thm:T2}.
In Section~\ref{Section4} we discuss the formalization and Lean verification of
the proof of Theorem~\ref{thm:T2}, including weblinks to the code and artifacts.

\section*{Acknowledgements}
The authors thank Ken Ribet, Ashvin Swaminathan and Ila Varma for comments on an
earlier version of this paper.
\ifmanyauthors
\else
This paper describes a test case for AxiomProver,
an autonomous system that is currently under development.
The project engineering team is
Chris Cummins,
Ben Eltschig,
GSM,
Dejan Grubisic,
Leopold Haller,
Letong Hong (principal investigator),
Andranik Kurghinyan,
Kenny Lau,
Hugh Leather,
Aram Markosyan,
Manooshree Patel,
Gaurang Pendharkar,
Vedant Rathi,
Alex Schneidman,
Volker Seeker,
Shubho Sengupta (principal investigator),
Ishan Sinha,
Jimmy Xin,
and Jujian Zhang.
\fi

\section{Nuts and Bolts}\label{Section2}

We first recall the relevant preliminaries for the proof of Theorem~\ref{thm:T2}.
Throughout, we fix $\alpha>1/2$ and set $M_{\alpha}(p)=\lfloor \sqrt{p}/(\log p)^{\alpha}\rfloor$.
For an integer $m\ge 1,$ we define
\begin{equation}\label{Pm}
P_m \coloneq \prod_{k=1}^{m} \bigl|\num(B_{2k})\bigr|\ \in\ \ZZ^{+}.
\end{equation}
Let $\omegapf(n)$ denote the number of \emph{distinct} prime divisors of a positive
integer $n$:
\[
\omegapf(n) \coloneq \#\{\ell \text{ prime}:\ \ell\mid n\}.
\]
The next lemma bounds the number of $m$-regular primes, when $m$ is a fixed positive integer.

\begin{lemma}\label{lem:primes-divide-product}
Let $p\ge 5$ be prime and $m\ge 1$.
If $p$ is not $m$-regular,
then $p\mid P_m$. In particular, we have
\[
\#\{\,p\le X\text{ prime}:\ p\text{ is not $m$-regular}\}\ \le\ \omegapf(P_m).
\]
\end{lemma}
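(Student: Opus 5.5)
The argument is a direct unwinding of the definitions; the only care needed is the truncation $\min(m,p-3)$ and the claim that the numerators are nonzero.

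\textbf{Step 1: unpack non-regularity.} Suppose $p\ge 5$ is not $m$-regular. By definition there is an even integer $2k$ with $2\le 2k\le \min(m,p-3)$ and $p\mid \num(B_{2k})$. In particular $1\le k$ and $2k\le m$, so $k\le m$. Hence $B_{2k}$ is one of the numbers whose numerators are multiplied in $P_m$ from \eqref{Pm}. Since $P_m$ is a product of integers, one of which is divisible by $p$, we get $p\mid P_m$. The hypothesis $p\ge 5$ only guarantees that the range $2\le 2k\le p-3$ is nonempty and meaningful; for smaller $p$ the condition is vacuous anyway.

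\textbf{Step 2: $P_m$ is a positive integer.} For the counting statement to be meaningful we need $P_m\neq 0$, i.e.\ $B_{2k}\neq 0$ for every $k\ge 1$. This follows from Euler's formula
\[
B_{2k}=(-1)^{k+1}\frac{2(2k)!}{(2\pi)^{2k}}\zeta(2k)
\qquad\text{with } \zeta(2k)>0,
\]
or equivalently from $\zeta(1-2k)=-B_{2k}/2k\neq 0$. Therefore $\num(B_{2k})$ is a nonzero integer and $P_m\in\ZZ^{+}$, as recorded in \eqref{Pm}.

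\textbf{Step 3: the count.} By Step 1, the map sending each non-$m$-regular prime $p\le X$ to itself lands in the set of prime divisors of $P_m$. This holds for every non-$m$-regular prime $p\ge 5$. If $p=3$ is also to be counted, note that $p-3=0$ makes the defining range empty, so $3$ is trivially $m$-regular and contributes nothing. The map is injective, so
\[
\#\{p\le X \text{ prime}: p \text{ not } m\text{-regular}\}\le \omegapf(P_m).
\]

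\textbf{Main obstacle.} Nothing here is genuinely hard. The only point requiring care is the nonvanishing of $B_{2k}$ in Step 2, together with the convention that $\num$ is taken in lowest terms. The bound is uniform in $X$, and its real use later is to combine it with an upper bound on $\omegapf(P_m)$, derived from the size of $|\num(B_{2k})|$ and from $\omegapf(n)\le \log_2 n$.
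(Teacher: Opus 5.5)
Your proof is correct and is essentially the paper's argument: some $k\le m$ has $p\mid\num(B_{2k})$, so $p\mid P_m$, and the primes in question inject into the prime divisors of $P_m$; your checks that $B_{2k}\neq 0$ (so $P_m\in\ZZ^{+}$) and that $p=3$ is vacuously $m$-regular are harmless additions the paper leaves implicit. One correction to your closing aside: the trivial bound $\omegapf(n)\le\log_2 n$ would only give $\omegapf(P_{m_X})\ll m_X^2\log m_X\ll X/(\log X)^{2\alpha-1}$, which is not $o(X/\log X)$ for $\alpha\le 1$, so the later argument genuinely needs the sharper bound $\omegapf(n)\ll\log n/\log\log n$ of Lemma~\ref{lem:omega-bound} to cancel the $\log m_X$ factor.
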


\begin{proof}
If $p$ is not $m$-regular, there exists $2\le 2k\le \min(m,p-3)$ such that
$p\mid \num(B_{2k})$. By definition of $P_m$, the factor $|\num(B_{2k})|$ appears in
the product, hence $p\mid P_m$. The counting inequality follows because distinct
primes $p\le X$ dividing $P_m$ form a subset of the set of all distinct primes
dividing $P_m$, whose cardinality is $\omegapf(P_m)$.
\end{proof}

We first recall two standard facts about Bernoulli numbers.

\begin{lemma}[Euler's formula for $\zeta(2k)$]\label{lem:euler-zeta}
For each integer $k\ge 1$, we have
\[
\zeta(2k)=(-1)^{k+1}\,\frac{(2\pi)^{2k}}{2(2k)!}\,B_{2k} \ \qquad {\text and}\ \qquad
\zeta(1-2k)=-\frac{B_{2k}}{2k}.
\]
\end{lemma}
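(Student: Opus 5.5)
The plan is to derive both identities from the partial-fraction (cotangent) expansion together with the generating function $t/(e^t-1)=\sum_{n\ge0}B_n t^n/n!$, and then to obtain the second identity from the first via the functional equation of $\zeta$.

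\emph{Step 1: the cotangent identity.} Rewriting the generating function gives
\[
\frac{t}{e^t-1}+\frac{t}{2}=\frac{t}{2}\coth\frac{t}{2}.
\]
The right side is even, so $B_{2k+1}=0$ for $k\ge1$. Substituting $t=2iz$ then yields
\[
z\cot z=\sum_{k\ge0}(-1)^k\frac{B_{2k}(2z)^{2k}}{(2k)!}.
\]

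\emph{Step 2: comparison with the Euler product expansion.} From $\sin z=z\prod_{n\ge1}(1-z^2/(n\pi)^2)$, logarithmic differentiation gives
\[
z\cot z=1-2\sum_{n\ge1}\frac{z^2}{n^2\pi^2-z^2}.
\]
For $|z|<\pi$, expand each term as a geometric series and interchange the sums, which is justified by absolute convergence. This gives
\[
z\cot z=1-2\sum_{k\ge1}\zeta(2k)\frac{z^{2k}}{\pi^{2k}}.
\]
Comparing the coefficients of $z^{2k}$ in the two expansions gives
\[
-2\zeta(2k)\pi^{-2k}=(-1)^k B_{2k}2^{2k}/(2k)!,
\]
which is exactly the first formula. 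The main technical point is justifying the product formula for $\sin$, or equivalently the partial-fraction expansion of $\cot$. This is standard; in the Lean setting it is available in Mathlib as the Bernoulli-number formula for $\zeta$ at even positive integers, so I would cite it rather than reprove it.

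\emph{Step 3: the value at $1-2k$.} Use the functional equation
\[
\zeta(1-s)=2(2\pi)^{-s}\cos(\pi s/2)\Gamma(s)\zeta(s)
\]
with $s=2k$. Here $\cos(\pi k)=(-1)^k$ and $\Gamma(2k)=(2k-1)!$, so
\[
\zeta(1-2k)=2(2\pi)^{-2k}(-1)^k(2k-1)!\,\zeta(2k).
\]
Substituting the first formula, the factors $(2\pi)^{2k}$ cancel, and we are left with
\[
\zeta(1-2k)=(-1)^{2k+1}\frac{(2k-1)!}{(2k)!}B_{2k}=-\frac{B_{2k}}{2k}.
\]

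An alternative route to the second formula avoids the functional equation. It continues the Hurwitz/Riemann zeta function via the Mellin integral $\Gamma(s)\zeta(s)=\int_0^\infty t^{s-1}/(e^t-1)\,dt$ on a Hankel contour. The residue computation then directly gives $\zeta(1-n)=-B_n/n$, under the convention $B_1=+1/2$ for $n=1$, which does not affect the even case. Either route suffices. The only real obstacle is analytic continuation, which we take as known.
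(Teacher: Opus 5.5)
Your argument is correct, but it runs in the opposite direction from the source the paper relies on. The paper gives no argument of its own and simply cites Apostol, Thm.~12.17. There, $\zeta(1-2k)=-B_{2k}/(2k)$ is obtained first, as the case $a=1$ of $\zeta(-n,a)=-B_{n+1}(a)/(n+1)$, coming from the contour-integral continuation of the Hurwitz zeta function. The value $\zeta(2k)$ is then read off from the functional equation at $s=2k$.

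You instead get $\zeta(2k)$ directly from the partial-fraction expansion of $\cot z$ compared with the Bernoulli generating function. Only then do you transport it to $1-2k$ via the functional equation. Your coefficient comparison checks out, as does the cancellation $2(2\pi)^{-2k}(-1)^k(2k-1)!\cdot(-1)^{k+1}(2\pi)^{2k}/(2(2k)!)=-1/(2k)$.

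Your ordering has a real advantage in the context of this paper. The only half of the lemma used later is the $\zeta(2k)$ identity, in Lemma~\ref{lem:num-growth} and in the lower bound for $\log P_m$ in Section~\ref{Section3}. Your route proves that identity with no analytic continuation at all. It needs only the product formula for $\sin z$ and absolute convergence for $|z|<\pi$.

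The cited route, by contrast, needs continuation and the functional equation even for $\zeta(2k)$. In exchange, it delivers the more general formula $\zeta(-n,a)=-B_{n+1}(a)/(n+1)$ for every $n\ge 0$ in one stroke.
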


\begin{proof}
This is classical; see \cite[Ch.~12, Thm.~12.17]{Apostol1976}.
\end{proof}

\begin{lemma}[von Staudt--Clausen]\label{lem:vsc}
For each integer $k\ge 1$, we have
\[
B_{2k}+\sum_{\substack{\ell\ \mathrm{prime}\\ (\ell-1)\mid 2k}}\frac{1}{\ell}\ \in\ \ZZ.
\]
In particular, we have that
\[
\den(B_{2k})=\prod_{\substack{\ell\ \mathrm{prime}\\ (\ell-1)\mid 2k}}\ell, \label{eqn:lean-mess}
\]
which implies that
\begin{equation}
\label{eqn:vsc_consequence}
    \den(B_{2k})\le \prod_{\ell\le 2k+1}\ell.
\end{equation}
\end{lemma}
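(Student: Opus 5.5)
The plan is to prove the congruence prime by prime in $\ZZ_{(p)}$ via power sums, and then read off the denominator and the bound. Fix a prime $p$ and an even $n=2k\ge 2$. The key intermediate statement I aim for is
\[
pB_n \equiv \sum_{a=0}^{p-1} a^{n} \pmod{p\ZZ_{(p)}}, \qquad pB_j\in\ZZ_{(p)} \text{ for all } j\ge 0.
\]
Granting it, the classical evaluation of the power sum finishes the main claim. Using a primitive root modulo $p$, one has $\sum_{a=1}^{p-1}a^{n}\equiv -1 \pmod p$ if $(p-1)\mid n$, and $\equiv 0$ otherwise. Hence $B_{2k}+\frac1p\,[\,(p-1)\mid 2k\,]\in\ZZ_{(p)}$. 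Every other summand $1/\ell$ with $\ell\neq p$ is $p$-integral, so $B_{2k}+\sum_{(\ell-1)\mid 2k}1/\ell\in\ZZ_{(p)}$ for every $p$, and therefore this quantity lies in $\ZZ$.

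To prove the intermediate statement I will use Faulhaber's formula
\[
S_n(p)=\sum_{a=0}^{p-1}a^n=\frac{1}{n+1}\sum_{j=0}^{n}\binom{n+1}{j}B_j\,p^{\,n+1-j},
\]
and argue by strong induction on $n$. Using $\binom{n+1}{j}/(n+1)=\binom{n}{j}/(n+1-j)$ and setting $m=n-j$, the $j$-th term equals
\[
\binom{n}{j}\,(pB_j)\,\frac{p^{m}}{m+1}.
\]
The $j=n$ term is exactly $pB_n$. For $j<n$ we have $m\ge1$, and by induction $pB_j\in\ZZ_{(p)}$. It then suffices that $v_p\bigl(p^m/(m+1)\bigr)\ge 1$. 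This follows from $m-v_p(m+1)\ge 1$, which holds for all $m\ge1$ except the single case $p=2$, $m=1$. That case is the step I expect to be the main obstacle: the term with $j=n-1$ when $p=2$. For the even $n=2k$ we care about, this $j=2k-1$ is odd. So $B_j=0$ unless $j=1$, i.e.\ $n=2$, and $n=2$ is checked directly since $B_2=1/6$. For general $n$, only $p$-integrality of $pB_n$ is needed for the induction, and that follows from the same identity with the weaker bound $v_p\ge0$.

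For the denominator formula: in $B_{2k}=N-\sum_{(\ell-1)\mid 2k}1/\ell$ with $N\in\ZZ$, the primes $\ell$ are distinct. Each contributes exactly one factor $\ell$ to the denominator, because all the other terms are $\ell$-integral. Hence $\den(B_{2k})=\prod_{(\ell-1)\mid 2k}\ell$. Finally, $(\ell-1)\mid 2k$ forces $\ell\le 2k+1$. So this product is a subproduct of $\prod_{\ell\le 2k+1}\ell$ over distinct primes, which gives \eqref{eqn:vsc_consequence}.
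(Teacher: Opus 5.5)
Your proof is correct, and it takes a more self-contained route than the paper. The paper's proof is only a citation to Apostol for the congruence, plus the one-line observation that $(\ell-1)\mid 2k$ forces $\ell\le 2k+1$, which is exactly your final step. Where the paper outsources the arithmetic, you prove it prime by prime with the classical power-sum argument. Strong induction on Faulhaber's formula gives $pB_j\in\ZZ_{(p)}$, and $pB_n\equiv\sum_{a=0}^{p-1}a^n \pmod{p\ZZ_{(p)}}$ for even $n$; your valuation bookkeeping, $m-v_p(m+1)\ge 1$ except at $p=2$, $m=1$, is right. The primitive-root evaluation of the power sum then gives the integrality statement. You also supply the $\ell$-adic valuation argument behind the ``in particular'' formula for $\den(B_{2k})$, which the paper asserts without comment.

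The citation keeps the paper focused on Theorem~\ref{thm:T2}, which only uses the crude bound $\den(B_{2k})\le\prod_{\ell\le 2k+1}\ell$. Your argument makes the lemma independent of the literature. It also shows how little that bound needs: squarefreeness (your $pB_{2k}\in\ZZ_{(p)}$), and $B_{2k}\in\ZZ_{(p)}$ for $p>2k+1$ (the $(p-1)\nmid 2k$ case of your congruence).

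One simplification: in the exceptional term $p=2$, $j=n-1$, the coefficient is $\binom{n}{n-1}=n$, which is even. So that term equals $n\cdot(2B_{n-1})\in 2\ZZ_{(2)}$ outright, and you need neither the vanishing of odd-index Bernoulli numbers nor a separate check of $n=2$.
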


\begin{proof}
See \cite[Ch.~12, Exercise~12]{Apostol1976} for a proof.
If $(\ell-1)\mid 2k$ then $\ell\le 2k+1$, so the stated upper bound follows.
\end{proof}

We use this lemma to derive an upper bound for the numerators of Bernoulli numbers,
when expressed in lowest terms.

\begin{lemma}\label{lem:num-growth}
There is an absolute constant $C_1>0$ such that for every integer $k\ge 1$, we have
\[
\log\bigl|\num(B_{2k})\bigr|\ \le\ C_1\,k\log(2k).
\]
Consequently, there is an absolute constant $C_2>0$ such that for all $m\ge 2$,
\[
\log P_m\ \le\ C_2\,m^2\log m.
\]
\end{lemma}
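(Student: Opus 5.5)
The plan is to write $|\num(B_{2k})| = |B_{2k}|\cdot\den(B_{2k})$ and bound each factor separately, using Lemma~\ref{lem:euler-zeta} for $|B_{2k}|$ and Lemma~\ref{lem:vsc} for the denominator.

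\textbf{Bounding $|B_{2k}|$.} From Euler's formula,
\[
|B_{2k}| = \frac{2(2k)!\,\zeta(2k)}{(2\pi)^{2k}} \le \frac{2\zeta(2)\,(2k)!}{(2\pi)^{2k}} \le 4\,(2k)!,
\]
since $\zeta(2k)\le\zeta(2)<2$ and $(2\pi)^{2k}\ge 1$. The trivial estimate $(2k)!\le (2k)^{2k}$ then gives
\[
\log|B_{2k}| \le \log 4 + 2k\log(2k).
\]

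\textbf{Bounding $\den(B_{2k})$.} By \eqref{eqn:vsc_consequence}, $\den(B_{2k})\le\prod_{\ell\le 2k+1}\ell$. I would avoid Chebyshev's bound $\vartheta(x)\le x\log 4$, and instead use the crude inequality
\[
\prod_{\ell\le 2k+1}\ell \le (2k+1)^{\pi(2k+1)} \le (2k+1)^{2k+1}.
\]
For $k\ge1$ this gives $\log\den(B_{2k})\le (2k+1)\log(2k+1)\le 3k\cdot 2\log(2k)$, using $2k+1\le 3k$ and $2k+1\le (2k)^2$ when $k\ge 1$. One caveat: at $k=1$ we have $\log(2k)=\log 2>0$, so every term is a bounded multiple of $k\log(2k)$, including the constant $\log 4\le 2\,k\log(2k)$. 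Adding the two bounds gives $\log|\num(B_{2k})|\le C_1 k\log(2k)$ with, say, $C_1=12$.

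\textbf{The consequence for $P_m$.} Summing over $k\le m$,
\[
\log P_m \le C_1\sum_{k=1}^m k\log(2k) \le C_1\, m^2\log(2m).
\]
For $m\ge2$ we have $\log(2m)\le 2\log m$, so $\log P_m\le 2C_1\,m^2\log m$ and we may take $C_2=2C_1$.

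There is no real obstacle here. The only care needed is at small $k$, making sure the additive constants are absorbed into multiples of $k\log(2k)$, and using the condition $m\ge2$ so that $\log m>0$. This is presumably the part that needed attention in the Lean formalization.
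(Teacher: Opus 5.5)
Your proof is correct and follows essentially the same route as the paper. Both write $|\num(B_{2k})| = |B_{2k}|\,\den(B_{2k})$, bound $|B_{2k}|$ by Euler's formula with $\zeta(2k)\le\zeta(2)<2$, and bound the denominator by von Staudt--Clausen; you use the primorial $\le (2k+1)^{2k+1}$, while the paper uses $(2k+1)!$ together with $\log(n!)\le n\log n$, and both arrive at the identical step $(2k+1)\log(2k+1)\le 6k\log(2k)$ before summing over $k\le m$. The only cosmetic difference is that you make the absorption of $\log 4$ and the constants ($C_1=12$, $C_2=2C_1$) explicit, where the paper just says ``a suitable absolute constant.''
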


\begin{proof}
Fix $k\ge 1$.
By Lemma~\ref{lem:euler-zeta}, we have
\[
|B_{2k}|=\frac{2(2k)!\,\zeta(2k)}{(2\pi)^{2k}}
\le \frac{2(2k)!\,\zeta(2)}{(2\pi)^{2k}}
< \frac{4(2k)!}{(2\pi)^{2k}},
\]
since $\zeta(2)=\pi^2/6<2$.
Thanks to Lemma~\ref{lem:vsc}, denominators of Bernoulli numbers are square-free, and satisfy
\[
\den(B_{2k})\le (2k+1)!.
\]
Writing $B_{2k}=\num(B_{2k})/\den(B_{2k})$ in lowest terms gives
$|\num(B_{2k})|=|B_{2k}|\den(B_{2k})$, hence
\[
|\num(B_{2k})|
\le \left(\frac{4(2k)!}{(2\pi)^{2k}}\right) (2k+1)!.
\]
Taking logs and using that $-2k\log(2\pi)<0$, we may discard the negative term to obtain
\[
\log\bigl|\num(B_{2k})\bigr|
\le \log 4 + \log\bigl((2k)!\bigr) + \log\bigl((2k+1)!\bigr).
\]
Using the elementary bound $\log(n!)=\sum_{j=1}^n \log j \le n\log n$, we get
\[
\log\bigl((2k)!\bigr)\le 2k\log(2k)
\quad\text{and}\quad
\log\bigl((2k+1)!\bigr)\le (2k+1)\log(2k+1).
\]
Moreover, since $2k+1\le 4k$, we have $\log(2k+1)\le \log(4k)=\log(2k)+\log 2\le 2\log(2k)$,
and also $2k+1\le 3k$ for $k\ge 1$. Hence
\[
\log\bigl((2k+1)!\bigr)\le (2k+1)\log(2k+1)
\le (2k+1)\log(4k)
\le 3k\cdot 2\log(2k)=6k\log(2k).
\]
Putting these estimates together yields
\[
\log\bigl|\num(B_{2k})\bigr|
\le \log 4 + 2k\log(2k) + 6k\log(2k)
\le C_1\,k\log(2k)
\]
for a suitable absolute constant $C_1>0$.

For the second claim, summing over $1\le k\le m$ and using $\log(2k)\le \log(2m)$ gives
\[
\log P_m=\sum_{k=1}^m \log\bigl|\num(B_{2k})\bigr|
\le C_1 \log(2m)\sum_{k=1}^m k
= \frac{C_1}{2}m(m+1)\log(2m)
\le C_2\,m^2\log m
\]
for all $m\ge 2$ and a suitable absolute constant $C_2>0$.
\end{proof}

In the following lemma, we apply the Prime Number Theorem.
It is a standard consequence of the asymptotic growth of the primorial.

\begin{lemma}\label{lem:omega-bound}
There exist absolute constants $C_3>0$ and $N_0\ge 3$ such that for every integer
$n\ge N_0$, we have
\[
\omegapf(n)\ \le\ C_3\,\frac{\log n}{\log\log n}.
\]
\end{lemma}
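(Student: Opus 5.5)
The plan is the classical primorial comparison. Let $r=\omegapf(n)$, let $q_1<q_2<\dots<q_r$ be the distinct primes dividing $n$, and let $p_1<p_2<\cdots$ be all the primes. Since the $q_i$ are distinct primes with $q_i\ge p_i$, and $\prod q_i$ divides $n$, we get
\[
n\ \ge\ \prod_{i=1}^r q_i\ \ge\ \prod_{i=1}^r p_i\ =\ e^{\thetaf(p_r)},
\]
where $\thetaf(x)=\sum_{\ell\le x}\log \ell$ is Chebyshev's function. This reduces the lemma to a lower bound for the primorial $\prod_{i\le r}p_i$ in terms of $r$.

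Next I would bound the primorial from below by something like $r^{cr}$. By the Prime Number Theorem (or Chebyshev's elementary bounds, which suffice), $\thetaf(x)\ge c_0 x$ for $x\ge 2$ and some absolute $c_0>0$. Also $p_r\ge r\log r$ for $r\ge 2$ (Rosser), or more crudely $p_r\ge c\, r\log r$. Hence $\log n\ge c_0\, p_r\ge c_4\, r\log r$ for $r\ge 2$. If one wants to stay fully elementary, one can instead use $\prod_{i\le r}p_i\ge \prod_{i\le r}(i+1)=(r+1)!$ together with $\log (r+1)!\ge r\log r - r$, which likewise gives $\log n\ge \tfrac12 r\log r$ for $r$ large.

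The last step inverts $r\log r\le C\log n$ to obtain $r\le C_3\log n/\log\log n$. I would argue by contradiction. Set $L=\log n$ and suppose $r> C_3 L/\log L$. Since $t\mapsto t\log t$ is increasing, this gives $r\log r> C_3 (L/\log L)\log(C_3L/\log L)$. For $L$ large we have $\log(L/\log L)\ge \tfrac12\log L$, so the right-hand side is at least $\tfrac{C_3}{2}L$. Choosing $C_3>2C$ contradicts $r\log r\le CL$. The threshold $N_0$ is chosen so that $\log\log n>0$ and the asymptotic inequalities used hold. The case $r\le 1$ is trivial once $n\ge N_0$ with $N_0$ large enough that $\log n/\log\log n\ge 1$.

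I expect the only real care point to be this inversion together with the bookkeeping of explicit thresholds, namely that $\log(L/\log L)\ge\tfrac12\log L$ and the small-$r$ cases. The analytic input is just Chebyshev-strength bounds on $\thetaf$.
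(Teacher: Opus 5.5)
Your proposal is correct and follows the paper's proof: the same primorial comparison $\log n\ge\thetaf(p_r)$, a lower bound $\log n\gg r\log r$, and the same contradiction-style inversion using $\log(L/\log L)\ge\frac12\log L$ for large $L$. The only difference is how you get $\thetaf(p_r)\gg r\log r$: the paper uses the Prime Number Theorem (through $\thetaf(x)\ge x/2$ and $\pif(x)\le 2x/\log x$), whereas your variant $\prod_{i\le r}p_i\ge (r+1)!$ shows that no input on the distribution of primes is needed at all.
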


\begin{proof}
Let $t=\omegapf(n)$ and list the distinct prime divisors of $n$ in increasing order:
$q_1<q_2<\cdots<q_t$. Then
\[
n\ \ge\ q_1q_2\cdots q_t\ \ge\ 2\cdot 3\cdot 5\cdots p_t,
\]
where $p_t$ is the $t$th prime and the last product is the $t$th primorial
$p_t^\# \coloneq \prod_{j=1}^{t} p_j$.
Taking logs yields
\[
\log n\ \ge\ \log(p_t^\#)=\thetaf(p_t),
\]
where $\thetaf(x)=\sum_{p\le x}\log p$ is Chebyshev's $\theta$-function.

The Prime Number Theorem is equivalent to $\thetaf(x)\sim x$ as $x\to\infty$
(see \cite[Ch.~4]{Apostol1976}).
In particular, there exists $x_0\ge 2$ such that for all $x\ge x_0$,
\begin{equation}\label{eq:theta-lower}
\thetaf(x)\ \ge\ \frac{x}{2}.
\end{equation}
Likewise, the Prime Number Theorem implies $\pif(x)\sim x/\log x$, hence there
exists $x_1\ge 3$ such that for all $x\ge x_1$, we have
\begin{equation}\label{eq:pi-upper}
\pif(x)\ \le\ 2\,\frac{x}{\log x}.
\end{equation}
Fix $t_0$ so that $p_t\ge \max(x_0,x_1)$ for all $t\ge t_0$.

Now assume $t\ge t_0$.
Applying \eqref{eq:pi-upper} at $x=p_t$ gives
$t=\pif(p_t)\le 2p_t/\log p_t$, hence
$p_t\ge \tfrac{t}{2}\log p_t \ge \tfrac{t}{2}\log t$ (since $p_t\ge t+1$).
Applying \eqref{eq:theta-lower} at $x=p_t$ yields
\[
\thetaf(p_t)\ \ge\ \frac{p_t}{2}\ \ge\ \frac{t}{4}\log t.
\]
Therefore, for $t\ge t_0$, we have
\[
\log n \ \ge\ \thetaf(p_t)\ \ge\ \frac{t}{4}\log t.
\]
Write $L=\log n$.
For $n$ sufficiently large (equivalently $L$ sufficiently large), the inequality
$L\ge \tfrac{t}{4}\log t$ forces $t\le 8L/\log L$ (a standard inversion of
$t\log t\ll L$).
Concretely: choose $L_0$ large enough that $\log(8L/\log L)\ge \tfrac12\log L$
for all $L\ge L_0$. If $t>8L/\log L$ and $L\ge L_0$, then
$t\log t \ge (8L/\log L)\cdot (\tfrac12\log L)=4L$, contradicting $t\log t\le 4L$.
Thus $t\le 8L/\log L$ for all $L\ge L_0$.

Finally, absorb the finitely many integers $n$ with $\log n<L_0$ into the constant.
That is, define $N_0=\left\lceil e^{L_0}\right\rceil$ and take
\[
C_3 \coloneq \max\!\left( 8,\ \max_{3\le n<N_0}\omegapf(n)\frac{\log\log n}{\log n}\right),
\]
which is finite.
Then $\omegapf(n)\le C_3\,\frac{\log n}{\log\log n}$ holds for all $n\ge N_0$.
\end{proof}

\section{Proof of Theorem~\ref{thm:T2}}\label{Section3}
Fix $\alpha>1/2$.
Let $X\ge 3$ and set
\[
m_X \coloneq \left\lfloor \frac{\sqrt{X}}{(\log X)^{\alpha}}\right\rfloor.
\]
We first reduce the varying range $M_{\alpha}(p)$ to the uniform range $m_X$.
Consider the function
\[ f(t)=\frac{\sqrt{t}}{(\log t)^{\alpha}} \]
for $t>1$.
A direct calculation gives
\[
f'(t)=\frac{1}{t^{1/2}(\log t)^{\alpha}}\left(\frac12-\frac{\alpha}{\log t}\right),
\]
so $f$ is increasing for all $t>e^{2\alpha}$.
Hence, for every prime $p$ with $e^{2\alpha}\le p\le X$, we have
\[
M_{\alpha}(p)=\lfloor f(p)\rfloor\ \le\ \lfloor f(X)\rfloor = m_X.
\]
Therefore, if such a prime $p$ is \emph{not} $M_{\alpha}(p)$-regular, it is also not
$m_X$-regular.
The remaining primes $p<e^{2\alpha}$ form a finite set depending only on $\alpha$; we absorb
their contribution into the final constant.

It remains to bound the number of primes $p\le X$ which are not $m_X$-regular.
By Lemma~\ref{lem:primes-divide-product},
\[
\#\{p\le X:\ p\ \text{not $m_X$-regular}\}\ \le\ \omegapf(P_{m_X}).
\]
For $X$ large enough we have $P_{m_X}\ge N_0$ (where $N_0$ is from Lemma~\ref{lem:omega-bound}),
so Lemma~\ref{lem:omega-bound} and Lemma~\ref{lem:num-growth} yield
\[
\omegapf(P_{m_X})
\le C_3\,\frac{\log P_{m_X}}{\log\log P_{m_X}}
\le C_3\,\frac{C_2\,m_X^2\log m_X}{\log\log P_{m_X}}.
\]
We now give a concrete lower bound for $\log\log P_{m_X}$ in terms of $\log m_X$.
For any integer $m\ge 1,$ we have
\[
P_m=\prod_{k=1}^m |\num(B_{2k})|\ \ge\ |\num(B_{2m})|\ \ge\ |B_{2m}|,
\]
since $\den(B_{2m})\ge 1$.
By Euler's formula (i.e., Lemma~\ref{lem:euler-zeta}) and the trivial inequality $\zeta(2m)>1$, we have
\[
|B_{2m}|=\frac{2(2m)!\,\zeta(2m)}{(2\pi)^{2m}}
\ \ge\ \frac{2(2m)!}{(2\pi)^{2m}}.
\]
Taking logarithms and using the elementary Stirling lower bound (for example, see \cite{Robbins})
\[ (2m)!\ge (2m/e)^{2m}, \]
we obtain
\[
\log P_m\ \ge\ \log|B_{2m}|
\ \ge\ \log 2 + \log((2m)!) - 2m\log(2\pi)
\ \ge\ \log 2 + 2m\log(2m) - 2m - 2m\log(2\pi).
\]
In particular, there exists an absolute integer $m_0\ge 2$ such that for all $m\ge m_0$ we have
\[
\log P_m \ge m\log m,
\]
and hence
\[
\log\log P_m \ge \log(m\log m)\ge \log m.
\]
Since $m_X\to\infty$ with $X$, for all sufficiently large $X$ we have $m_X\ge m_0$, and therefore
\[
\log\log P_{m_X}\ge \log m_X.
\]
Substituting this into the previous bound gives, for all sufficiently large $X$,
\[
\omegapf(P_{m_X})
\le C_3\,\frac{C_2\,m_X^2\log m_X}{\log\log P_{m_X}}
\le C_2C_3\,m_X^2.
\]
Finally, $m_X^2\le X/(\log X)^{2\alpha}$, so for all sufficiently large $X$, we have
\[
\#\{p\le X:\ p\ \text{not $M_{\alpha}(p)$-regular}\}\ \le\ \omegapf(P_{m_X})
\le (C_2C_3)\,\frac{X}{(\log X)^{2\alpha}}.
\]
Enlarging the constant to absorb the finitely many exceptional primes $p<e^{2\alpha}$ and the finitely many
remaining values of $X$ proves the theorem.
\qed

\section{AxiomProver's autonomous proof and Lean verification}\label{Section4}
Here, we provide context for this project as well as the protocol used for
formalization and Lean verification.

\subsection{A case study}
To the best of our knowledge, Theorem~\ref{thm:T2} and Corollary~\ref{thm:T1} are new.
The methods are standard for domain experts in analytic number theory (i.e., Bernoulli number congruences,
Euler's formula for $\zeta(1-2k)$, and sieve methods).
We asked whether AxiomProver, an AI tool currently under development,
can autonomously derive a correct proof of an analytic number-theoretic estimate from its statement.

What did we learn?
AxiomProver successfully autonomously proved and Lean verified Theorem~\ref{thm:T2},
the main engine of this paper, given only the natural-language statement of the claim.
No human assistance was provided.

At first glance, the proofs found by AxiomProver do not resemble the narrative presented in this paper.
Turning a Lean file into a human-readable proof can be hard
because Lean is written as code for a type-checker,
not as an explanation for a reader.
It makes all the “obvious” bookkeeping explicit (rewrite steps, coercions, side conditions, case splits),
and it often follows whatever lemmas and tactic-driven routes are most convenient
for the library rather than the most illuminating conceptual path.

A human mathematician can usually condense this dramatically by
relying on shared historical context\footnote{For example,
  \texttt{solution.lean} consists of $\sim 3500$ lines of code,
  and more than $1000$ of them correspond to establishing the conclusion
  \eqref{eqn:vsc_consequence} of Lemma~\ref{lem:vsc}, the classical von Staudt-Clausen Theorem.
  In our narrative, we simply cite this classical result.},
standard arguments, and informal identifications that Lean cannot take for granted.
As a result, writing a paper from Lean files is not a matter of reformatting.
The authors must digest the formal script, reconstruct the underlying ideas,
and then translate the code into a narrative that highlights the key insights
while safely omitting the routine details Lean had to spell out.

\begin{remark}
To make the scope precise, we emphasize that the AI system was not asked to reprove deep external
theorems such as cyclotomic descent or the Herbrand--Ribet Theorem. These are cited from the literature.
The case study focuses on the proof of Theorem~\ref{thm:T2}.
\end{remark}

\subsection{AxiomProver Protocol}
Here we describe the protocol we employed using AxiomProver to
autonomously prove and verify Theorem~\ref{thm:T2} in Lean (see \cite{Lean, Mathlib2020}),
the main engine in the paper apart from the seminal work of Euler, Kummer, Herbrand, Leopoldt, Mazur-Wiles,
and Ribet that we cite and employ.
All of the mathematics required for the formalization of Theorem~\ref{thm:T2} are given in Sections 2 and 3.

\subsection*{Process}
The formal proofs provided in this work were developed and verified using Lean \textbf{4.26.0}.
Compatibility with earlier or later versions is not guaranteed due to the
evolving nature of the Lean 4 compiler and its core libraries.
The relevant files are all posted in the following repository:
\begin{center}
  \url{https://github.com/AxiomMath/partial-regularity}
\end{center}
The input files were
\begin{itemize}
  \item \texttt{problem.tex}, the problem statement in natural language
  \item a configuration file \texttt{.environment} that contains the single line
  \begin{quote}
    \slshape
    lean-4.26.0
  \end{quote}
  which specifies to AxiomProver which version of Lean should be used.
\end{itemize}
Given these two input files,
AxiomProver autonomously provided the following output files:
\begin{itemize}
  \item \texttt{problem.lean}, a Lean 4.26.0 formalization of the problem statement; and
  \item \texttt{solution.lean}, a complete Lean 4.26.0 formalization of the proof.
\end{itemize}
After AxiomProver generated a solution, the human authors wrote this paper
(without the use of AI) for human readers.
Indeed, as mentioned above, a research paper is a narrative designed to communicate ideas to humans,
whereas  Lean files are designed to satisfy a computer kernel.
It is interesting to note that AxiomProver independently derived the von Staudt Clausen theorem in this run.

\subsection*{An additional Lean proof of Remark~\ref{rem:ten}}
In the main run above, AxiomProver did not receive any ``hints''
and was required to produce the entire proof end-to-end.

However, besides the end-to-end run above, we conducted internal experiments
where we provided AxiomProver with additional resources,
such as the formalization of the prime number theorem
maintained by Alex Kontorovich \cite{KontrovichPNT}
(although AxiomProver did not end up needing this capability).
In one of these experiments, AxiomProver found that
actually $C_{\alpha} = 10$ works for all $\alpha > 1/2$ in Theorem~\ref{thm:T2};
this led us to add Remark~\ref{rem:ten} to the paper.
Also, we found that this run did not use the von Staudt Clausen theorem;
rather, it independently derived  \eqref{eqn:vsc_consequence} which it used to prove Theorem~\ref{thm:T2}.

We thus provide the corresponding Lean files as \texttt{extension/*.lean} in the above repository
as a computer-generated proof that Remark~\ref{rem:ten} is indeed true.

\end{document}